%% file: paper_r2.tex
\documentclass[graybox]{svmult}
\input{macros}

\usepackage{dune}

\usepackage{type1cm}        
\usepackage{makeidx}         
\usepackage{graphicx}        
\usepackage{wrapfig}         
\usepackage{multicol}        
\usepackage[bottom]{footmisc}

\usepackage{newtxtext}       %
\usepackage[varvw]{newtxmath}       

\usepackage{xcolor}
\usepackage[pdftex]{hyperref}
\definecolor{hrefcolor}{rgb}{0.0,0.0,0.8}
\newcommand{\linkcolor}{hrefcolor}

\usepackage{hyperref}
\usepackage[left]{lineno}
\hypersetup{pdftitle={Structure Preserving Schemes for Cahn-Hilliard},
            pdfborderstyle={/S/U/W 1},
            linkbordercolor=\linkcolor,
            urlbordercolor=\linkcolor,
            citebordercolor=\linkcolor,
            runbordercolor=\linkcolor,
            menubordercolor=\linkcolor,
            filebordercolor=\linkcolor,
            runcolor=\linkcolor}

\makeindex             

\begin{document}

\title*{On $hp$-adaptive Structure-Preservation for the Cahn--Hilliard--Navier--Stokes Equations with Degenerate Mobility
}
\titlerunning{$hp$-adapt Structure-Preserving Methods for the CHNS Equations}
\author{Jimmy Kornelije Gunnarsson\orcidID{0009-0000-7610-6360} and\\ Robert Kl\"ofkorn\orcidID{0000-0001-9664-0333}}
\institute{Jimmy Kornelije Gunnarsson \at Centre for Mathematical Sciences, Lund University, Box 117, 22100 Lund, Sweden \email{jimmy\_kornelije.gunnarsson@math.lu.se}
\and Robert Kl\"ofkorn \at Centre for Mathematical Sciences, Lund University, Box 117, 22100 Lund, Sweden \email{robertk@math.lu.se}}
%
%
\maketitle
\input{abstract}
\section{Introduction}
\label{sec:1}
\input{introduction}
\section{Discretization}
\label{sec:2}
\input{discretization.tex}
\section{Numerics}\label{sec:numerics}
\input{numerics}
\section{Summary and Outlook}\label{sec:summary}
\input{summary}


%
%

\bibliographystyle{spmpsci}
\bibliography{refs}
\end{document}

%% file: macros.tex
\newcommand{\sipg}{SIPG\xspace}
\newcommand{\swip}{SWIP\xspace}
\newcommand{\sipgL}{SIPG-L\xspace}
\newcommand{\swipL}{SWIP-L\xspace}
\newcommand{\sipgDL}{SIPGD-L\xspace}
\newcommand{\swipDL}{SWIPD-L\xspace}
\newcommand{\swipD}{SWIPD\xspace}
\newcommand{\sipgD}{SIPGD\xspace}
\newcommand{\fem}{FEM\xspace}
\newcommand{\femL}{FEM-L\xspace}

\newcommand{\grid}{{\mathcal{T}_h}}
\newcommand{\entity}{K}
\newcommand{\elem}{\entity}
\newcommand{\elemin}{\elem^{-}}
\newcommand{\elemout}{\elem^{+}}

\newcommand{\isec}{e}

\newcommand{\RRR}{\mathbb{R}}

\newcommand{\R}{\RRR}

\newcommand{\nbold}{\boldsymbol{n}}

\newcommand{\jump}[1]{[ \! [ {#1} ] \! ] }
\newcommand{\vjump}[1]{ [ {#1} ] }
\newcommand{\aver}[1]{\{ {#1} \} }
\newcommand{\haver}[1]{\langle {#1} \rangle}

\newcommand{\chem}{\upsilon}

\newcommand{\Kplus}{{K^+_e}}
\newcommand{\Kminus}{{K^-_e}}
\newcommand{\varphiplus}{\varphi_{|\Kplus}}
\newcommand{\varphiminus}{\varphi_{|\Kminus}}

\newcommand{\nboldplus}{\nbold_{\Kplus}}

\newcommand{\pf}[0]{\psi}

\newcommand{\mb}[1]{\boldsymbol{#1}}

\newcommand{\ome}[0]{\Omega}
\newcommand{\mbu}[0]{\mb{u}}
\newcommand{\mbx}[0]{\mb{x}}

\newcommand{\vp}[0]{\varphi}

\newcommand{\Po}[0]{\mathbb{P}}

\newcommand{\innerProd}[2]{\left\langle #1, #2\right\rangle}

\newcommand{\pmax}{{p_{\max}}}
\newcommand{\pmin}{{p_{\min}}}

%% file: abstract.tex
\abstract{We develop structure-preserving discontinuous Galerkin methods for the Cahn-Hilliard-Navier-Stokes equations with degenerate mobility. The proposed \swipDL and \sipgDL methods incorporate parametrized mobility fluxes with edge-wise mobility treatments for enhanced coercivity-stability control. We prove coercivity for the generalized trilinear form and demonstrate optimal convergence rates while preserving mass conservation, energy dissipation, and the discrete maximum principle. Comparisons with existing \sipgL and \swipL methods confirm similar stability. Validation on $hp$-adaptive meshes for both standalone Cahn-Hilliard and coupled systems shows significant computational savings without accuracy loss.}

%% file: introduction.tex
Consider the spatial domain $\Omega \subset \R^d$, $d = 2,3$, with Lipschitz continuous
boundary $\partial \Omega$ and the time interval $(0,T]$ for $T \in \R^+$. The Cahn-Hilliard
(CH) equations model phase separation in binary mixtures for a phase-field $\pf :
\Omega \times (0,T] \to [-1,1]$ following:
\begin{eqnarray}\label{eq:CH}
\partial_t \pf + \nabla \cdot (\pf \mathbf{u}) -  \nabla \cdot Pe^{-1}(M(\pf) \nabla
\chem) &=&0,\\
\chem  +Cn^2 \Delta \pf - W'(\pf) &=& 0,
\end{eqnarray}
where $Pe > 0$ is the Peclet number, $\mbu$ is an advection field, $\chem$ is the
chemical potential, $W(\pf) := \frac{1}{4}(1 - \pf^2)^2$ is the double-well potential,
and $M(\pf) := \max{\{1 - \pf^2,0\}}$ is the mobility function. We equip Eq.~\eqref{eq:CH}
with homogeneous Neumann boundary conditions $\mb{n} \cdot \nabla \pf = 0$ and $\mb{n}
\cdot \nabla \chem = 0$ on $\partial \Omega$.

We present below a special case of a fundamental theorem for boundedness.
\begin{theorem}[Boundedness \cite{Elliott:2000}]\label{thm:bounded}
Suppose that $\Omega$ is convex and that the initial phase-field satisfies $\pf^0 \in H^1(\Omega)$
and $||\pf^0||_{L^\infty(\Omega)} \leq 1$. Then, if the mobility function $M(\pf)$ is
defined as above, and $\int_{\Omega} M(\pf^0) + W(\pf^0) < C$ for $C > 0$, then the weak solution $\pf \in H^1(\Omega)$
of the CH equation satisfies a weak maximum principle, i.e., $||\pf||_{L^\infty(\Omega)}
\leq 1$ for $t \in (0,T]$.
\end{theorem}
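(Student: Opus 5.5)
The plan is to follow the Elliott--Garcke entropy approach for degenerate mobility, working with $\mathbf{u}=0$; a divergence-free advection field with $\mathbf{u}\cdot\mb{n}=0$ adds nothing to the estimates below. The weak solution is built as a limit of solutions of regularized problems. The problem is regularized with a strictly positive mobility $M_\varepsilon(\pf) \ge \varepsilon > 0$ that agrees with $M$ on $[-1+\varepsilon, 1-\varepsilon]$ and is extended by $M_\varepsilon(\pm(1-\varepsilon))$ outside. For fixed $\varepsilon$ the problem is nondegenerate, so a Galerkin approximation together with the standard energy estimate yields a solution pair $(\pf_\varepsilon,\chem_\varepsilon)$. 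This energy estimate comes from testing with $\chem_\varepsilon$ and with $\partial_t\pf_\varepsilon$:
\begin{equation*}
\frac{Cn^2}{2}\|\nabla \pf_\varepsilon(t)\|^2 + \int_\Omega W(\pf_\varepsilon(t)) + Pe^{-1}\int_0^t\!\!\int_\Omega M_\varepsilon(\pf_\varepsilon)|\nabla\chem_\varepsilon|^2 \le E(\pf^0).
\end{equation*}

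The key second a priori bound is an entropy estimate. Define $\Phi_\varepsilon$ by $\Phi_\varepsilon''(s) = 1/M_\varepsilon(s)$ with $\Phi_\varepsilon(0)=\Phi_\varepsilon'(0)=0$. Testing the first equation with $\Phi_\varepsilon'(\pf_\varepsilon)$ converts the mobility flux into $-\int \nabla\chem_\varepsilon\cdot\nabla\pf_\varepsilon$. Substituting the second equation, this term equals
\begin{equation*}
-Cn^2\|\Delta\pf_\varepsilon\|^2 - \int_\Omega W''(\pf_\varepsilon)|\nabla\pf_\varepsilon|^2 .
\end{equation*}
Since $-W''\le 1$ is bounded, we obtain
\begin{equation*}
\int_\Omega\Phi_\varepsilon(\pf_\varepsilon(t)) + Cn^2\int_0^t\|\Delta\pf_\varepsilon\|^2 \le \int_\Omega\Phi_\varepsilon(\pf^0) + C\int_0^t\|\nabla\pf_\varepsilon\|^2 .
\end{equation*}
Convexity of $\Omega$ is used here, to integrate by parts $\int\Delta\pf\,\chem$ with Neumann data and to control $H^2$ norms by $\|\Delta\pf\|$. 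The hypothesis $\int_\Omega M(\pf^0)+W(\pf^0)<C$, together with $|\pf^0|\le 1$, is what makes the right-hand side bounded independently of $\varepsilon$. I would check carefully that $\Phi_\varepsilon(s)\le \Phi_0(s)$ on $[-1,1]$, where $\Phi_0$ is integrable; it has only logarithmic singularities at $\pm1$. The $H^1$ bound on $\pf_\varepsilon$ then comes from the energy estimate.

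The maximum principle follows from this uniform entropy bound. For $|s|>1$, $\Phi_\varepsilon(s)\ge (|s|-1)^2/(2\varepsilon)$ up to lower-order terms, because $\Phi_\varepsilon''=1/M_\varepsilon\ge c/\varepsilon$ there. Hence $\int_\Omega (|\pf_\varepsilon|-1)_+^2 \le C\varepsilon$ uniformly in $t$. Compactness (Aubin--Lions, using the bound on $\partial_t\pf_\varepsilon$ in $(H^1)'$ obtained from the flux bound $\sqrt{M_\varepsilon}\nabla\chem_\varepsilon\in L^2$) gives $\pf_\varepsilon\to\pf$ strongly in $L^2$. Passing to the limit yields $(|\pf|-1)_+=0$ a.e., that is, $\|\pf\|_{L^\infty}\le1$.

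I expect the main obstacle to be the identification of the limit as a weak solution in the degenerate setting. Only $J_\varepsilon=M_\varepsilon\nabla\chem_\varepsilon$ is controlled, and $\chem_\varepsilon$ itself is not bounded where $M=0$. The plan is to pass to the limit in the flux using the $L^2(H^2)$ bound from the entropy estimate, writing $\int M_\varepsilon\nabla\chem_\varepsilon\cdot\nabla\eta$ through $\chem_\varepsilon=-Cn^2\Delta\pf_\varepsilon+W'(\pf_\varepsilon)$ and integrating by parts. This gives a weak formulation that does not involve $\chem$ at all, in which strong convergence of $\nabla\pf_\varepsilon$ and weak convergence of $\Delta\pf_\varepsilon$ suffice. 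The bound $\|\pf\|_\infty\le1$ itself depends only on the entropy estimate.
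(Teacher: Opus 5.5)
Your proposal is correct and follows the Elliott--Garcke regularization/entropy argument that the paper only cites (the paper gives no proof of its own): truncated mobility $M_\varepsilon$, entropy $\Phi_\varepsilon''=1/M_\varepsilon$ giving $\int_\Omega(|\pf_\varepsilon|-1)_+^2\le C\varepsilon$, compactness, and a $\chem$-free flux formulation in the limit, which yields existence of a weak solution with $|\pf|\le 1$ (this is what the cited result asserts, since uniqueness is not available for degenerate mobility). Two small corrections: $\Phi_0(s)=\frac12[(1+s)\ln(1+s)+(1-s)\ln(1-s)]$ is bounded by $\ln 2$ on $[-1,1]$ (only $\Phi_0'$ is singular), so $|\pf^0|\le 1$ alone bounds $\int_\Omega\Phi_\varepsilon(\pf^0)$ and the hypothesis on $\int_\Omega M(\pf^0)+W(\pf^0)$ holds automatically and plays no role; and a prescribed advection field does leave the term $\int_\Omega(\mathbf{u}\cdot\nabla\pf)\,\chem$ in the energy estimate (it vanishes only in the entropy estimate), so it needs a Gronwall argument with $\|\nabla\mathbf{u}\|_{L^\infty}$ or the Navier--Stokes energy rather than adding nothing.
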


While Theorem~\ref{thm:bounded}, proven in \cite[Thm. 1]{Elliott:2000}, guarantees
a weak maximum principle at the continuum level for a degenerate mobility, achieving
discrete boundedness remains a significant challenge. The standard Finite Element Method (\fem),
symmetric weighted interior penalty (\swip), and symmetric interior penalty Galerkin
(\sipg) discretizations fail to preserve boundedness without additional stabilization
\cite{Gunnarsson:2026}, even when using the degenerate mobility $M$. Recent work,
however, has demonstrated that boundedness can be numerically achieved through carefully
designed limited Galerkin schemes, including \femL for \fem, and for Discontinuous Galerkin (DG) schemes with \sipgL, and
\swipL \cite{Gunnarsson:2026} with the degenerate CH equations.

In this work, we extend these structure-preserving methods by introducing new mobility
fluxes using the DG formulations with intersection-wise
mobility treatments. These extend the \swipL and \sipgL methods introduced in \cite{Gunnarsson:2026},
by allowing for better control of the coercivity-stability balance. \par

Boundedness of the CH equations is in particular important when coupled to the incompressible
Navier-Stokes (NS) equations:
\begin{eqnarray}
    \rho(\pf)\big(\partial_{t} \mbu + \mbu \cdot \nabla \mbu\big)
    + \mb{J} \cdot \nabla \mbu
    + \nabla \cdot \big(P \mathbb{I} - 2\mu(\pf)D(\mbu)\big)
&=& -\frac{1}{WeCn} \pf \nabla \chem
    \label{eq:ns1nd} \\
  \nabla \cdot \mathbf{u} &=& 0
    \label{eq:ns2nd}
\end{eqnarray}
where $\rho(\pf) := \frac{1+\pf}{2}\rho_1 + \frac{1-\pf}{2}\rho_2$ is the density,
$\mb{J} := \frac{\rho_2 - \rho_1}{2}Pe^{-1}M(\pf)\nabla \chem$ is the mass flux,
$P$ is the pressure, $\mu(\pf) := \frac{1+\pf}{2}\mu_1 + \frac{1-\pf}{2}\mu_2$ is
the viscosity, and $D(\mbu) = \frac{1}{2}(\nabla \mbu + \nabla \mbu^T)$ is the rate
of deformation tensor, where $\mathbb{I}$ denotes the identity tensor. Moreover,
$\rho_j$ and $\mu_j$ are the characteristic densities and viscosities for phases
$j = 1, 2$, respectively. We equip the boundary conditions $\mathbf{n} \cdot \mbu|_{\partial
\Omega} = \mathbf{n} \cdot \nabla P|_{\partial \Omega} = 0$. We note that the expected
bounds for the physical mass density $\rho \in [\rho_1, \rho_2]$ follow directly
from boundedness of the phase-field $\pf \in [-1,1]$.\par

The main contributions of this paper are: (i) the development of \swipDL and \sipgDL
methods with a parametrized mobility flux for enhanced stability, (ii) a proof of
coercivity for the generalized trilinear form for \swipD and \sipgD schemes, and
(iii) numerical validation demonstrating optimal convergence rates and structure
preservation for both standalone CH and coupled CHNS systems. We compare the performance
of \sipgL, \swipL, \sipgDL, and \swipDL methods through extensive numerical experiments,
including $hp$-adaptive formulations for our new schemes.

%% file: discretization.tex
In this section, we consider the spatial and temporal discretization of the CHNS
equation system. Our main focus is to motivate the design of the \swipDL and \sipgDL
methods and to provide a proof of coercivity for the resulting trilinear form, while
the remaining details follow a similar procedure as in \cite{Gunnarsson:2026}. \par

 Let the spatial domain $\ome$ \xspace be partitioned into a union of $N_\grid$ non-intersecting
elements $\elem$ forming a mesh $\grid = \cup_{i = 1}^{N_\grid} \elem_i$. Then we denote
by $\Gamma_i$ with unit normal $\mb{n}$ the set of all intersections between
two
elements of the grid $\grid$, and the set of all
intersections, also with the boundary of the domain $\Omega$, is denoted by
$\Gamma$.
For an intersection $\isec \in \Gamma$ we denote the adjacent elements with $\elemin_\isec$
and
$\elemout_\isec$ ($\elemin_\isec = \elemout_\isec$ for $\isec \in \Gamma \setminus \Gamma_i$)
and
define
\begin{equation}
  h_\isec := \frac{2|\elemin_\isec||\elemout_\isec|}{|\isec|(|\elemin_\isec| + |\elemout_\isec|)}
\quad \forall
\isec \in \Gamma.
\end{equation}
The global mesh width is then defined as $h = \max_{\isec \in \Gamma} h_\isec$.
For the time discretization we consider a uniform partition of the time interval
$[0,T]$ with time increment $\dt = \frac{T}{N}$ for some $N \in \mathbb{N}$ and denote
the time levels with $t_n = n \cdot \dt$ for $n = 0,1,\ldots,N$. For brevity we will
also denote time-dependent variables with a subscript $n$, i.e., $\pf^n = \pf(\cdot,
t_n)$.

Following standard \fem notation we consider a general order \fem formulation
for the function space of trial and test functions:
\begin{equation}
V_h^p = \{ \varphi \in L^2(\grid) : \varphi|_\elem \in \Po^p(\elem), \forall \elem
\in \grid
\},
\end{equation}
where $\Po^p(\elem)$ denotes a polynomial space of order at most $p$ on an
element $\elem$. We also permit $p$-adaptivity over the mesh $\grid$, and as such,
the
polynomial order $p$ may vary across elements $\elem$, and we denote the local polynomial
order as $p_\elem$. We denote the maximal permitted polynomial order as $p_{\max}$,
and the minimal order as $p_{\min}$. We describe the application further in Sec.~\ref{sec:hp}.
\par
 Before proceeding, we introduce
operators $\vjump{\cdot}, \aver{\cdot}$ and $\haver{\cdot}$ for
 $\isec \in \Gamma_i$ as
\begin{equation*}
  \begin{split}
    \vjump{\varphi} = \varphiminus  - \varphiplus, \qquad
    \aver{\varphi} = \frac{1}{2}\left(\varphiminus+\varphiplus\right), \qquad
\haver{\varphi} = \frac{2\, \varphiplus \, \varphiminus}{\varphiplus + \varphiminus},
\\
  \end{split}
\end{equation*}
for some $\varphi$, where $\aver{\cdot}$ and $\haver{\cdot}$ denote the arithmetic
and harmonic averages, respectively.
To simplify the notation $\varphi^\pm := \varphi_{|K_e^\pm}$ is also used
frequently.
Moreover, we introduce the subscript $\varphi_\oplus := \max{\{0,\varphi\}}$
and $\varphi_\ominus := \min{\{0, \varphi\}}$ to denote the positive and negative
restriction of a function, respectively. This notation will in particular be
utilized for upwinding.  \par
\subsection{Discontinuous Galerkin}
Given a degenerate mobility function $M: V_h^p \to [0,1]$, we introduce the trilinear
form $b: L^\infty(\grid) \times V_h^p \times V_h^p \to \R$:
\begin{eqnarray} \label{eq:mobilityDG}
b(M(\pf_h), \chem_h, \vp) &=
    \int_\grid M(\pf_h)\, \nabla \chem_h \cdot \nabla \vp\, dx
    + \sum_{e \in \Gamma} \int_e
        \frac{\pena_e \Lambda_\isec(M(\pf_h))}{h_\isec} \vjump{\chem_h} \vjump{\vp}
\notag \\
        &- F(M(\pf_h), \chem_h)\, \vjump{\vp}
        -  F(M(\pf_h), \vp)\, \vjump{\chem_h}
     ds,
    \label{eq:diffmob}
\end{eqnarray}
where $\eta_\isec \geq 0$ is a penalty parameter over $\isec$, $F$ is the mobility
flux, and $\Lambda_\isec(M(\pf_h))$ is a mobility flux function. We examine the coercivity
properties arising from the degeneracy of $M$. We consider the following formulations
for $F$:
\begin{equation}
F(M(\pf_h), \chem_h) = \haver{M(\pf_h)} \aver{ \nabla \chem_h}, \quad F(M(\pf_h),
\chem_h) = \aver{M(\pf_h) \nabla \chem_h},
\end{equation}
corresponding to a weighted average with a harmonic average for the mobility flux
$\haver{M(\pf_h)}$ and an arithmetic average $\aver{M(\pf_h) \nabla \chem_h}$, which
correspond to the \swip and \sipg discretization, respectively. We note that the
\swip formulation with $\haver{M(\pf_h)}$ follows from choosing the weights $w^\pm
= \frac{M(\pf_h)^\mp}{M(\pf_h)^+ + M(\pf_h)^-}$, as suggested in \cite{ern:2008}
for treating a diffusion-type formulation for DG.
\begin{lemma}[Trace inequality, \cite{Riviere:2008}]\label{lem:trace}
Consider an intersection $e \in \Gamma_i$ shared by two elements $\elemin_\isec$
and
$\elemout_\isec$.
Then
  there exists a constant $C_{\elem_\isec^\pm} > 0$ independent of the mesh
width $h$ such that
for all $\varphi_h \in V^p_h$ for $p > 0$ and $\isec \in \Gamma_i$ the following
inequality
holds:
\begin{equation}\label{eq:traceineq}
  ||\nabla \varphi_h^\pm \cdot \mathbf{n}^+||_{L^2(\isec)}^2 \leq \frac{|\isec|}{|\elem_\isec^\pm|}
C_{\elem_\isec^\pm},
||\nabla
\varphi_h||_{L^2(K_\isec^\pm)}^2,
\end{equation}
where, in the special case of $K_\isec$ being a quadrilateral or triangle, $C_{\elem_\isec^\pm}
= \frac{p(p+d-1)}{d}$ (see further treatments and formulations in \cite{Ainsworth:2009}
and \cite{Epshteyn:2007}).
\end{lemma}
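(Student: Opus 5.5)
The plan is to reduce Lemma~\ref{lem:trace} to a single polynomial inverse-trace estimate on each element and then scale it. Fix an interior intersection $\isec \in \Gamma_i$ and one of its neighbours $\elem = \elem_\isec^\pm$. On $\elem$ the function $\varphi_h|_\elem$ is a polynomial of degree at most $p$, so every component of $\nabla\varphi_h|_\elem$ lies in $\Po^{p-1}(\elem)$. Since $|\nabla \varphi_h^\pm\cdot\mathbf{n}^+| \le |\nabla\varphi_h^\pm|$ pointwise on $\isec$, it is enough to bound $\|\nabla\varphi_h^\pm\|_{L^2(\isec)}^2$. The hypothesis $p>0$ only ensures the gradient is not identically zero; for $p=0$ both sides vanish anyway.

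The key step is the classical discrete trace inequality for polynomials: for $q\in\Po^{k}(\elem)$ and a face $\isec\subset\partial\elem$,
\begin{equation*}
\|q\|_{L^2(\isec)}^2 \le C(k,d,\elem)\,\frac{|\isec|}{|\elem|}\,\|q\|_{L^2(\elem)}^2 .
\end{equation*}
I would prove this on a reference element $\hat\elem$ by equivalence of norms on the finite-dimensional space $\Po^k(\hat\elem)$; the face norm is a seminorm dominated by the volume norm. I would then transfer it to $\elem$ through the affine map $F_\elem$. Under this map the volume integral scales by $|\elem|/|\hat\elem|$ and the face integral by $|\isec|/|\hat\isec|$, which produces exactly the factor $|\isec|/|\elem|$, independent of $h$. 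I apply this with $k=p-1$ componentwise to $q=\partial_j\varphi_h$ and sum over $j$. This gives \eqref{eq:traceineq} with $C_{\elem_\isec^\pm}$ depending only on $p$, $d$ and the shape of $\elem$ (for affine, shape-regular elements), and not on $h$.

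For the explicit constant on simplices I would not use norm equivalence but the sharp result of Warburton--Hesthaven, as refined in \cite{Ainsworth:2009} and \cite{Epshteyn:2007}. For $q\in\Po^{k}$ on a $d$-simplex,
\begin{equation*}
\|q\|^2_{L^2(\isec)}\le \frac{(k+1)(k+d)}{d}\,\frac{|\isec|}{|\elem|}\,\|q\|_{L^2(\elem)}^2 .
\end{equation*}
I expect this step to be the main obstacle. The hard part is identifying the correct polynomial degree and the resulting constant. Applied to the gradient with $k=p-1$, the bound reads $\frac{p(p+d-1)}{d}$, which matches the stated value. The sharp constant comes from expanding $q$ in an orthogonal (Jacobi/Dubiner) basis adapted to the face and solving a Rayleigh-quotient eigenvalue problem. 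I would cite that computation rather than reproduce it.

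The quadrilateral case needs separate care. A tensor-product space is not $\Po^k$, and the sharp constant for $\mathbb{Q}^k$ has a different form. I would therefore either treat the quadrilateral constant as inherited from the cited references or check it by a one-dimensional Legendre argument in the normal direction. It is possible that the constant claimed in the lemma needs adjusting for quadrilaterals.
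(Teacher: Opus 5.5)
Your proposal is correct for simplices and follows the route the paper relies on. The paper gives no argument of its own beyond citing \cite{Riviere:2008}, \cite{Ainsworth:2009}, \cite{Epshteyn:2007}: reference-element scaling gives an $h$-independent constant in the form $\frac{|\isec|}{|\elem|}C$, and the Warburton--Hesthaven bound applied in degree $p-1$ gives exactly $\frac{p(p+d-1)}{d}$.

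Your doubt about quadrilaterals is justified, and it is the statement rather than your argument that fails there. Take $\elem=(-1,1)^2$ with $p=2$, $\vp=x_1+\frac{3}{2}x_1^2$ and $\isec=\{x_1=1\}$. Then $\|\nabla\vp\cdot\mathbf{n}\|_{L^2(\isec)}^2=32$ but $\frac{|\isec|}{|\elem|}\|\nabla\vp\|_{L^2(\elem)}^2=8$. So the constant must be at least $4=p^2>3=\frac{p(p+1)}{2}$, and on rectangles the one-dimensional Legendre argument you mention gives exactly $p^2$.
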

\begin{lemma}[Trace inequality for mixed polynomial order]\label{lem:trace2}
  Under the same setting as in Lemma~\ref{lem:trace} and for $\varphi_h \in V_h^\pmax$
for $\pmax > 0$.
the following inequality holds for all $\isec \in \Gamma_i$:
\begin{equation}
    ||\aver{\nabla \varphi \cdot \mathbf{n}}||_{L^2(\isec)}^2 \leq \frac{C_{\isec}}{2h_e}
\left( ||\nabla \varphi^+ \cdot \mathbf{n}^+||_{L^2(K_\isec^+)}^2
+ ||\nabla \varphi^- \cdot \mathbf{n}^+||_{L^2(K_\isec^-)}^2 \right),
\end{equation}
where for the special case where $K^\pm_\isec$ are quadrilaterals or triangles:
\begin{equation}
C_{\isec} =  \underset{{p \in \{p_{\elem^+_\isec}, p_{\elem^-_\isec}\}}}{\max} \frac{p(p+d-1)}{d}.
\end{equation}
\end{lemma}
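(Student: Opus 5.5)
The idea is to bound the average by a weighted Young inequality whose weights are tuned to the two element sizes, and to apply the polynomial trace inequality to the scalar normal derivative rather than to the full gradient. This produces exactly the factor $C_\isec/(2h_\isec)$ and the normal-component norms on the right-hand side.

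\textbf{Step 1: Notation.} Fix $\isec\in\Gamma_i$ and assume $\isec$ is flat, so that $\mathbf{n}^+$ is constant on $\isec$. Set $q^\pm := \nabla\varphi^\pm\cdot\mathbf{n}^+$ on $\elem^\pm_\isec$, so that $\aver{\nabla\varphi\cdot\mathbf{n}} = \tfrac12(q^-+q^+)$ on $\isec$. Since $\varphi^\pm\in\Po^{p^\pm}(\elem^\pm_\isec)$ with $p^\pm := p_{\elem^\pm_\isec}$, each $q^\pm$ is a polynomial of degree at most $p^\pm-1$ on the whole element. Define $r^\pm := |\isec|/|\elem^\pm_\isec|$. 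From the definition of $h_\isec$,
\begin{equation*}
\frac{1}{h_\isec} = \frac{|\isec|(|\elem^+_\isec|+|\elem^-_\isec|)}{2|\elem^+_\isec||\elem^-_\isec|} = \frac12\,(r^++r^-).
\end{equation*}

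\textbf{Step 2: Trace estimate for the scalar normal derivative.} Apply the polynomial trace estimate underlying Lemma~\ref{lem:trace} to the scalar polynomial $q^\pm$ itself. For a polynomial of degree $k$ on a triangle or quadrilateral, the sharp constant is $(k+1)(k+d)/d$. With $k=p^\pm-1$ this equals $p^\pm(p^\pm+d-1)/d$, which is precisely the constant in Lemma~\ref{lem:trace}. Bounding both constants by their maximum $C_\isec$ gives
\begin{equation*}
\|q^\pm\|_{L^2(\isec)}^2 \le C_\isec\, r^\pm\, \|q^\pm\|_{L^2(\elem^\pm_\isec)}^2 .
\end{equation*}
For general element shapes, $C_{\elem^\pm_\isec}$ is simply the constant of this scalar trace inequality, and $C_\isec$ is the maximum of the two.

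\textbf{Step 3: Weighted Young inequality.} For any $t>0$,
\begin{equation*}
\|\tfrac12(q^-+q^+)\|_{L^2(\isec)}^2 \le \tfrac14(1+t)\|q^-\|_{L^2(\isec)}^2 + \tfrac14(1+t^{-1})\|q^+\|_{L^2(\isec)}^2 .
\end{equation*}
The naive choice $t=1$ only yields the factor $\tfrac{C_\isec}{2}\max(r^+,r^-)$. That is weaker than $C_\isec/(2h_\isec)$ whenever the neighbours have unequal sizes, and choosing $t$ correctly is the only delicate point of the proof.

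Choose $t = r^+/r^-$ and insert Step 2:
\begin{itemize}
\item the $q^-$ term becomes $\tfrac14\bigl(1+\tfrac{r^+}{r^-}\bigr) C_\isec r^-\|q^-\|^2_{L^2(\elem^-_\isec)} = \tfrac{C_\isec}{4}(r^++r^-)\|q^-\|^2_{L^2(\elem^-_\isec)}$;
\item symmetrically, the $q^+$ term becomes $\tfrac{C_\isec}{4}(r^++r^-)\|q^+\|^2_{L^2(\elem^+_\isec)}$.
\end{itemize}

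\textbf{Step 4: Conclusion.} By Step 1, $\tfrac{C_\isec}{4}(r^++r^-) = \tfrac{C_\isec}{2h_\isec}$. Summing the two terms from Step 3 gives exactly
\begin{equation*}
\|\aver{\nabla\varphi\cdot\mathbf{n}}\|_{L^2(\isec)}^2 \le \frac{C_\isec}{2h_\isec}\left(\|\nabla\varphi^+\cdot\mathbf{n}^+\|^2_{L^2(\elem^+_\isec)} + \|\nabla\varphi^-\cdot\mathbf{n}^+\|^2_{L^2(\elem^-_\isec)}\right),
\end{equation*}
which is the claimed inequality.
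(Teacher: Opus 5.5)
Your argument is correct (on simplices outright, and for other shapes given a scalar trace inequality with constants $C_{\elem^\pm_\isec}$). It takes a genuinely different route from the paper, and it is sharper exactly at the step you flag as delicate.

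The paper splits the average with the unweighted bound $\|\frac{1}{2}(a+b)\|^2\le\frac{1}{2}(\|a\|^2+\|b\|^2)$, i.e.\ your choice $t=1$. It then applies Lemma~\ref{lem:trace} on each side and invokes $r^+C_{\elem^+_\isec}+r^-C_{\elem^-_\isec}\le 2C_\isec/h_\isec$ (with your $r^\pm=|\isec|/|\elem^\pm_\isec|$) ``for a joint inequality''. Taken literally, that chain only gives the factor $\frac{C_\isec}{2}\max(r^+,r^-)$. This lies in $[C_\isec/(2h_\isec),\,C_\isec/h_\isec)$ and equals the stated constant only when $|\elem^+_\isec|=|\elem^-_\isec|$. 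Your weight $t=r^+/r^-$ is what actually produces $\frac{C_\isec}{4}(r^++r^-)=C_\isec/(2h_\isec)$ for neighbours of different size, which the paper's adaptive meshes do contain.

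A second difference: Lemma~\ref{lem:trace} as stated has the full gradient $\|\nabla\varphi\|_{L^2(\elem^\pm_\isec)}$ on its right-hand side. The paper's route therefore really yields the weaker full-gradient variant, which is all Theorem~\ref{thm:coercivity} needs. By applying the scalar trace estimate to the degree-$(p^\pm-1)$ polynomial $q^\pm$, you obtain the normal-component norms that the statement actually contains, with the same constant, as you correctly check.

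What the paper's route buys is that it uses Lemma~\ref{lem:trace} verbatim. Yours needs the scalar estimate, which is not a formal consequence of that lemma. On simplices it is the sharp Warburton--Hesthaven bound, so your argument is complete there. However, you should drop ``or quadrilateral'' from the sharpness claim in Step 2. On $\elem=[0,1]^2$ with $\isec=\{x=0\}$, the function $u=2-3x\in\Po^1$ has $\|u\|^2_{L^2(\isec)}=4$, whereas $\frac{(k+1)(k+d)}{d}\frac{|\isec|}{|\elem|}\|u\|^2_{L^2(\elem)}=3$. The same example with $\varphi=2x-\frac{3}{2}x^2$ contradicts the quadrilateral constant in Lemma~\ref{lem:trace} itself. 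In the quadrilateral case the constant is therefore an assumption you inherit from the paper, not a fact you can cite.
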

\begin{proof}
The proof follows from the triangle inequality and the trace inequality in Lemma~\ref{lem:trace}:
\begin{equation}
    ||\aver{\nabla \varphi \cdot \mathbf{n}}||_{L^2(\isec)}^2 \leq \frac{1}{2} \left(||\nabla
\varphi^+ \cdot \mathbf{n}||_{L^2(\isec)}^2 + ||\nabla \varphi^- \cdot \mathbf{n}||_{L^2(\isec)}^2\right),
\end{equation}
then upon applying the trace inequality in Lemma~\ref{lem:trace} we obtain the desired
result by then maximizing the quantity with
\begin{equation}
    \frac{|\isec|}{|\elem_\isec^+|} C_{\elem_\isec^+} + \frac{|\isec|}{|\elem_\isec^-|}
C_{\elem_\isec^-}
\leq 2\frac{C_e}{h_e},
\end{equation}
for a joint inequality.
\end{proof}

\begin{theorem}[Coercivity] \label{thm:coercivity}
Consider a fixed phase-field $\pf_h \in V_h^\pmax$ for $\pmax > 0$ satisfying $M(||\pf_h||_{L^\infty(\grid)})
\geq \delta$ for some small $\delta > 0$. Then for $\upsilon_h \in V_h^\pmax$ there exists
a constant $\pena_\isec > 0$ and mobility flux $\Lambda_e$ independent
of the mesh width $h$ such that for $\isec \in \Gamma_i$ the trilinear form $b$ in
Eq.~\eqref{eq:diffmob} is coercive such that for the DG semi-norm
\begin{equation*}
  ||\upsilon_h||_{DG}^2 = ||\sqrt{M(\pf_h)} \nabla \upsilon_h||^2 + \sum_{\isec \in
\Gamma_i} \int_\isec \frac{\Lambda_\isec(M(\pf_h)) \eta_\isec}{h_\isec} \vjump{\upsilon_h}^2
ds,
\end{equation*}
for some  $C > 0$ where $ ||\upsilon_h||_{DG}^2 \leq C \, b(M(\pf_h),
\upsilon_h, \upsilon_h)$. Then the following coercivity condition holds:
\begin{equation}
    \pena_\isec \geq \max_{p \in \{\elem_\isec^+, \elem_\isec^-\}}\max\{p(p+d-1), 1\}, \quad
\Lambda_\isec(M(\pf_h)) \geq \tilde{M}_e^{2\alpha} (\pf_h)
\lambda^\star,
\end{equation}
where for $\alpha \in [0,\frac{1}{2}]$, the maximum contrast $\lambda^\star$ is defined
as
\begin{equation}
    \lambda^\star = \max_{\elem \in \grid} \sum_{\isec \in \partial \elem}  \frac{||\tilde{M}_e
(\pf_h)||_{L^\infty(e)}^{2 -2\alpha}}{\underset{\mbx \in \elem}{\min} M(\pf_h(\mbx))},
\end{equation}
and the flux $\tilde{M}_\isec(\pf_h)$ is defined as
\begin{equation}
\tilde{M}_\isec(\pf_h) = \haver{M(\pf_h)}, \quad \tilde{M}_\isec(\pf_h) = \max \{M(\pf_h^+),
M(\pf_h^-)\},
\end{equation}
for the \swip and \sipg mobility fluxes, respectively.
\end{theorem}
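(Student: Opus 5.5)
The plan is the standard SIPG coercivity argument, made mobility-aware and intersection-wise. Set $\vp=\upsilon_h$ in Eq.~\eqref{eq:diffmob}. This gives
\begin{equation*}
b(M(\pf_h),\upsilon_h,\upsilon_h) = \|\sqrt{M(\pf_h)}\nabla\upsilon_h\|^2 + \sum_{\isec}\int_\isec \frac{\pena_\isec\Lambda_\isec}{h_\isec}\vjump{\upsilon_h}^2\,ds - 2\sum_{\isec\in\Gamma_i}\int_\isec F(M(\pf_h),\upsilon_h)\vjump{\upsilon_h}\,ds .
\end{equation*}
Boundary intersections carry no jump under the Neumann setting, so only $\Gamma_i$ contributes to the consistency term. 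All the work is in bounding that last term by a fraction of the first two.

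First I bound the flux pointwise by $\tilde M_\isec$ times an average of gradients. In the \swip case, $F=\haver{M}\aver{\nabla\upsilon_h}$ and $\tilde M_\isec=\haver{M}$ exactly. In the \sipg case, $|\aver{M\nabla\upsilon_h\cdot\mathbf n}|\le \max\{M^+,M^-\}\,\aver{|\nabla\upsilon_h\cdot\mathbf n|}$, which yields $\tilde M_\isec=\max\{M^+,M^-\}$. I then split $\tilde M_\isec=\tilde M_\isec^{\alpha}\tilde M_\isec^{1-\alpha}$ with $\alpha\in[0,\tfrac12]$, and apply Cauchy--Schwarz and Young with a parameter $\epsilon$:
\begin{equation*}
2\int_\isec \tilde M_\isec |\aver{\nabla\upsilon_h\cdot\mathbf n}|\,|\vjump{\upsilon_h}|\,ds \le \epsilon\, h_\isec\|\tilde M_\isec^{1-\alpha}\|_{L^\infty(\isec)}^2\|\aver{\nabla\upsilon_h\cdot \mathbf n}\|_{L^2(\isec)}^2 + \frac{1}{\epsilon h_\isec}\int_\isec \tilde M_\isec^{2\alpha}\vjump{\upsilon_h}^2\,ds .
\end{equation*}

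Next, Lemma~\ref{lem:trace2} turns the first term into at most $\frac{\epsilon C_\isec}{2}\|\tilde M_\isec\|_{L^\infty(\isec)}^{2-2\alpha}\big(\|\nabla\upsilon_h\|_{L^2(K^+_\isec)}^2+\|\nabla\upsilon_h\|_{L^2(K^-_\isec)}^2\big)$. On each element I insert the mobility through $\|\nabla\upsilon_h\|_{L^2(\elem)}^2\le (\min_\elem M(\pf_h))^{-1}\|\sqrt{M}\nabla\upsilon_h\|_{L^2(\elem)}^2$. This is where the hypothesis $M\ge\delta$ is needed, so that the minimum is positive. Summing over intersections and regrouping by elements, each element collects $\sum_{\isec\subset\partial\elem}\|\tilde M_\isec\|^{2-2\alpha}_{L^\infty(\isec)}/\min_\elem M$, which is bounded by $\lambda^\star$. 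The gradient contribution is therefore at most $\frac{\epsilon}{2}\max_\isec C_\isec\,\lambda^\star\,\|\sqrt M\nabla\upsilon_h\|^2$.

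Finally I choose $\epsilon$ so that $\frac{\epsilon}{2}\max C_\isec\lambda^\star\le\tfrac12$, i.e.\ $\epsilon\sim 1/(C_\isec\lambda^\star)$. The jump remainder is then $\lesssim C_\isec\lambda^\star\tilde M_\isec^{2\alpha}h_\isec^{-1}\vjump{\upsilon_h}^2$. With $\Lambda_\isec\ge\tilde M_\isec^{2\alpha}\lambda^\star$ and $\pena_\isec\ge\max\{p(p+d-1),1\}\ge d\,C_\isec$, up to a fixed factor, this remainder is absorbed by half of the penalty term. That gives $b\ge\tfrac12\|\upsilon_h\|_{DG}^2$, so $C=2$. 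Since $C_\isec$, $\lambda^\star$ and the $h_\isec$ scalings in Lemma~\ref{lem:trace2} do not involve $h$, neither $\pena_\isec$ nor $\Lambda_\isec$ does.

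The main obstacle is the bookkeeping of constants. The factors $1/2$ from Lemma~\ref{lem:trace2}, $1/d$ in $C_\isec$, and the Young parameter must combine into exactly the stated threshold $\max\{p(p+d-1),1\}$ instead of a multiple of it. I would first fix $\epsilon=2/(d\,\max C_\isec\,\lambda^\star)$ and check whether the factor of $d$ cancels correctly. If it does not, I would adjust the coercivity constant $C$ rather than the penalty bound, since the statement only requires some $C>0$. A secondary point is the \sipg case: the regrouping must handle $M^\pm$ on a face sitting inside the $\min_\elem M$ of the neighbouring element. Using the maximum $\tilde M_\isec$ in the numerator of $\lambda^\star$ covers this.
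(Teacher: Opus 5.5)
Your argument follows the paper's route. You split $\tilde M_\isec=\tilde M_\isec^{1-\alpha}\tilde M_\isec^{\alpha}$ in the consistency term, apply Cauchy--Schwarz, use Lemma~\ref{lem:trace2}, pass to $\|\sqrt{M}\nabla\upsilon_h\|_{L^2(\elem)}$ through $\min_\elem M$, and regroup the face weights element-wise into $\lambda^\star$. In the SIPG case you end up with $\aver{|\nabla\upsilon_h\cdot\nbold|}$ instead of $|\aver{\nabla\upsilon_h\cdot\nbold}|$; the proof of Lemma~\ref{lem:trace2} covers this verbatim.

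The step that fails as written is the choice of a single global Young parameter $\epsilon=2/(d\max_{\isec'}C_{\isec'}\lambda^\star)$. The stated threshold is local: it only guarantees $\eta_\isec\Lambda_\isec\ge d\,C_\isec\lambda^\star\tilde M_\isec^{2\alpha}$, with the $C_\isec$ of the two neighbours of $\isec$. Your remainder on $\isec$, however, is $\frac{d\max_{\isec'}C_{\isec'}\lambda^\star}{2h_\isec}\int_\isec\tilde M_\isec^{2\alpha}\vjump{\upsilon_h}^2\,ds$. Under $p$-adaptivity this exceeds half of the guaranteed penalty on every face with $C_\isec<\max_{\isec'}C_{\isec'}$. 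Moreover, in your estimate no single $\epsilon$ can work once $\max_{\isec'}C_{\isec'}\ge 2d\min_\isec C_\isec$, for example $d=2$ and $p_\elem\in\{1,3\}$, which gives $C_\isec\in\{1,6\}$. You would only prove the condition with $\pmax$ in place of the local degrees.

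You already wrote $\epsilon\sim 1/(C_\isec\lambda^\star)$; take it literally, face by face: $\epsilon_\isec=1/(C_\isec\lambda^\star)$. Faces with $C_\isec=0$ carry no consistency term. Then the trace constant cancels on each face. The gradient contributions sum to at most $\frac{1}{2\lambda^\star}\sum_\elem\big(\sum_{\isec\subset\partial\elem}\|\tilde M_\isec\|_{L^\infty(\isec)}^{2-2\alpha}\big)\|\nabla\upsilon_h\|_{L^2(\elem)}^2\le\frac12\|\sqrt{M}\nabla\upsilon_h\|^2$. The remainder on $\isec$ is $\frac{C_\isec\lambda^\star}{h_\isec}\int_\isec\tilde M_\isec^{2\alpha}\vjump{\upsilon_h}^2\,ds\le\frac1d\int_\isec\frac{\eta_\isec\Lambda_\isec}{h_\isec}\vjump{\upsilon_h}^2\,ds$. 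Since $d\ge2$, this yields $b(M(\psi_h),\upsilon_h,\upsilon_h)\ge\frac12\|\upsilon_h\|_{DG}^2$.

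This is the paper's ``maximizing over the intersection'' step, and it also settles your bookkeeping worry. The factor $d$ in $p(p+d-1)=d\,C_\isec$ is slack, not something that must cancel. Your fallback of adjusting $C$ instead of the penalty only works because such slack exists; an insufficient penalty cannot be rescued by any choice of $C$.
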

\begin{proof}
The overall proof follows similarly to the one in \cite[Theorem 3.2]{Gunnarsson:2026},
with the
main difference being the treatment of the mobility term $M(\pf_h)$ for the first
Cauchy-Schwarz inequality over the intersections $\isec \in \Gamma_i$. In particular
we note that for $\isec \in \Gamma_i$ we
have
\begin{equation}
  | F(M(\pf_h), \chem_h)\jump{\chem_h}| \leq  ||\tilde{M}(\pf_h)^{1-\alpha} \aver{\nabla
\chem \cdot \nboldplus}||_{L^2(e)} ||\tilde{M}(\pf_h)^\alpha \jump{\chem_h}||_{L^2(e)}.
\end{equation}
Following a similar approach to the proof in \cite{Gunnarsson:2026}, which
considered $\alpha = 0$, we generalize to a free variable $\alpha \in [0,\frac{1}{2}]$
to obtain
the desired result by also applying the modified trace inequality in Lemma~\ref{lem:trace2}
and maximizing the resulting quantity for a joint inequality over the intersection
$\isec$.
\end{proof}
For the remainder of this study we will consider the regularization $M_\delta(\pf_h)
= \max\{M(\pf_h), \delta\}$ for some small $\delta > 0$ to ensure the coercivity
condition in Theorem~\ref{thm:coercivity} is satisfied, and we will simply denote
this regularized mobility as $M(\pf_h)$ for brevity and we pick $\delta = 10^{-20}$
(practically $0$) for the numerical simulations.
\begin{remark}[mobility  flux]\label{rem:fs}
It is complicated to estimate the maximum contrast $\lambda^\star$ for a sharp estimate.
For simplicity, it is sufficient to consider the mobility flux as $\Lambda_e = \beta
\tilde{M}_e^{2\alpha}(\pf_h)$
for some constant a-priori estimate $\beta \geq \lambda^\star$. For instance,
 $\beta = 5$ was considered in the numerical experiments in \cite{Gunnarsson:2026}.
\end{remark}
\begin{corollary}[Case $V_h^0$]\label{cor:coercivity}
For the case of \xspace$V_h^0$ the coercivity condition in Thm.~\ref{thm:coercivity} reduces
to
\begin{equation}
    \pena_\isec \geq 1, \quad \Lambda_\isec(M(\pf_h)) \geq \tilde{M}(\pf_h).
\end{equation}
\end{corollary}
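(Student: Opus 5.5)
For $V_h^0$ the discrete functions are piecewise constant, so $\nabla \upsilon_h \equiv 0$ and $\nabla \vp \equiv 0$ on every element. I would therefore not go through the general machinery of Theorem~\ref{thm:coercivity}, but evaluate the trilinear form in Eq.~\eqref{eq:diffmob} directly, and then explain how the stated conditions arise from the general ones.

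\emph{Step 1: direct evaluation.} Both admissible fluxes, $F=\haver{M(\pf_h)}\aver{\nabla\chem_h}$ and $F=\aver{M(\pf_h)\nabla\chem_h}$, vanish identically because they contain $\nabla\chem_h$ or $\nabla\vp$. The volume term $\int_\grid M(\pf_h)\nabla\upsilon_h\cdot\nabla\upsilon_h\,dx$ also vanishes. Hence
\begin{equation*}
b(M(\pf_h),\upsilon_h,\upsilon_h)=\sum_{\isec\in\Gamma}\int_\isec \frac{\pena_\isec\Lambda_\isec(M(\pf_h))}{h_\isec}\vjump{\upsilon_h}^2\,ds .
\end{equation*}
For interior intersections this is exactly the jump part of $\|\upsilon_h\|_{DG}^2$, and the gradient part is zero. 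Boundary intersections, where $\elemin_\isec=\elemout_\isec$, give zero jump. Consequently $\|\upsilon_h\|_{DG}^2=b(M(\pf_h),\upsilon_h,\upsilon_h)$ with $C=1$, provided only that $\pena_\isec\ge0$ and $\Lambda_\isec\ge0$. The stated conditions $\pena_\isec\ge1$ and $\Lambda_\isec\ge\tilde M$ are thus certainly sufficient.

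\emph{Step 2: consistency with Theorem~\ref{thm:coercivity}.} To show that the corollary is the specialization of the theorem, I would substitute $p=0$ into the theorem's conditions.
\begin{itemize}
\item The penalty condition gives $\max\{p(p+d-1),1\}=1$, hence $\pena_\isec\ge1$.
\item In the trace inequality of Lemma~\ref{lem:trace2}, $C_\isec=0$, so the Cauchy--Schwarz bound on the flux term (the one weighted by $\tilde M^{1-\alpha}$) contributes nothing. The exponent $\alpha$ is then free, and I would take $\alpha=\tfrac12$.
\end{itemize}
With $\alpha=\tfrac12$ the requirement becomes $\Lambda_\isec\ge\tilde M_\isec\lambda^\star$. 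Since the term $\lambda^\star$ controls appears with the vanishing trace constant, the contrast factor can be dropped, and it suffices to take $\Lambda_\isec\ge\tilde M_\isec(\pf_h)$.

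\emph{Main obstacle.} The only delicate point is justifying that the $\lambda^\star$ factor may be dropped, since the theorem as stated includes it irrespective of $p$. I would resolve this by relying on the direct computation of Step 1, which bypasses the theorem's estimate altogether, rather than on a limiting argument in the theorem's constants.
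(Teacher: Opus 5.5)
Your Step~1 is the paper's argument: for piecewise constants the volume term and both flux terms vanish, so only the penalty term of $b$ survives, and the stated conditions imply $\eta_e\Lambda_e \geq \tilde{M}$, the paper's reduced condition. Your observation that, with the DG seminorm as defined in Theorem~\ref{thm:coercivity}, one in fact gets equality with $C=1$ is a slight sharpening; Step~2 is unnecessary and its dropping of $\lambda^\star$ is only heuristic (for $p=0$ the estimate that produces $\lambda^\star$ disappears entirely), but since you rest the proof on Step~1 this does not affect correctness.
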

\begin{proof}
Since for $\vp \in V_h^0$ then $\nabla \vp = 0$ and thus only the penalty term remains
in the trilinear form $b$ in Eq.~\eqref{eq:diffmob}, and consequently the coercivity
condition reduces to $\pena_\isec \Lambda_\isec(M(\pf_h)) \geq \tilde{M}(\pf_h)$,
which is satisfied for the stated conditions.
\end{proof}
Similarly to Eq.~\eqref{eq:mobilityDG} we introduce the following bilinear form
for the discretization of the Laplacian operator in Eq.~\eqref{eq:CH}:
\begin{eqnarray}
a(\pf_h, \vp) &=
    \int_\grid \nabla \pf_h \cdot \nabla \vp\, dx
    + \sum_{e \in \Gamma} \int_e
        \frac{\pena_e^\Delta}{h_\isec} \vjump{\pf_h} \vjump{\vp} \notag \\
        &- \aver{\nabla \pf_h \cdot \nbold} \vjump{\vp}
        - \aver{\nabla \vp \cdot \nbold} \vjump{\pf_h}
     ds,
    \label{eq:laplace}
\end{eqnarray}
where $\pena_e^\Delta \geq \pena_e$ is a local penalty parameter for the Laplacian
operator. We introduce the superscript $\Delta$ to distinguish the penalty parameter
for the Laplacian from the one for the mobility term in Eq.~\eqref{eq:mobilityDG},
and in particular, to lump the constants in practice so that $\pena_e^\Delta = \beta
\pena_e$ for some $\beta \geq 1$ following Rmk.~\ref{rem:fs}. \par
When we consider advection coupling we introduce the standard upwinding for the advection
term:
\begin{equation}\label{eq:adv2}
c(\mb{u}, \pf_h, \vp) = \int_\grid \mb{u} \cdot \nabla \vp \, \pf_h \, dx -
\sum_{e \in \Gamma_i} \int_e \left( \aver{\mb{u} \cdot \mathbf{n}^+}_{\oplus}
\pf_h^+ + \aver{\mb{u} \cdot \mathbf{n}^+}_{\ominus} \pf_h^-  \right) \vjump{\vp}
ds,
\end{equation}
Thus, we have introduced all the DG-specific bilinear and trilinear forms for the
spatial discretization of the CH equations. In the following, we consider the time
discretization and coupling to the NS equations. \par
To treat the non-linear term $W$ we introduce the Eyre splitting:
\begin{equation}
\Phi^+(\pf_h) = \pf_h^3, \quad \Phi^-(\pf_h) = -\pf_h,
\end{equation}
where clearly $W'(\pf_h) = \Phi^+(\pf_h) + \Phi^-(\pf_h)$. Below we present our proposed
schemes for the CH equations:
\begin{eqnarray}
  \frac{1}{\dt}\innerProd{\pf_h^{n+1}- \pf_h^{n}}{\vp} -
c(\tilde{\mb{u}}^{n + \frac{1}{2}}, \pf_h^{n}, \vp) + Pe^{-1} b(M(\pf_h^{n+1}), \chem_h^{n+1},
\vp) &=& 0,  \label{eq:ch1stt}\\
  \innerProd{\chem_h^{n+1}}{\xi} - \innerProd{\Phi^+(\pf_h^{n+1}) + \Phi^-(\pf_h^{n})}{\xi}
- Cn^2 a (\pf_h^{n+1}, \xi)  &=& 0, \label{eq:ch2ndt}
\end{eqnarray}
for $\xi, \vp \in V_h^p$ and $\tilde{\mb{u}}^{n+ \frac{1}{2}}$ being the velocity field
at time step $n + \frac{1}{2}$. We note that the mobility term, and penalty weights,
are treated implicitly in time. The main difference between the proposed schemes
is the mobility flux for the penalty term, where we consider both a harmonic average
and an intersection maximum as described in Eq.~\eqref{eq:mobilityDG}. The outline
and algorithm of the scheme are very similar to those detailed in \cite{Gunnarsson:2026},
as
it is simply an algebraic reformulation of the same scheme with a different mobility
flux for the penalty term, therefore we do not repeat the details here. \par
For the NS coupling we follow exactly the same formulation as in~\cite{Gunnarsson:2026}
with a post-processing step. Our modification includes projecting the \fem velocity
field $\mbu$ to a Brezzi-Douglas-Marini space of first order to obtain a divergence-free
velocity field $\tilde{\mbu}$.
\begin{definition}[Phase-field mass]\label{def:mass}
We refer to the quantity
\begin{equation}
  m_{\pf_h} = \frac{1}{|\grid|} \int_\grid \pf_h \, dx
\end{equation}
with $|\grid| = \int_\grid 1 \, dx$ as the phase-field mass.
\end{definition}
\begin{definition}[Discrete energy]\label{def:energy}
We refer to the quantity:
\begin{equation}
  \mathcal{E}[\pf_h, \mbu] = \frac{1}{WeCn}\int_\grid W(\pf_h) + \frac{Cn^2}{2} a(\pf_h,
\pf_h) \, dx + \int_\grid \rho(\pf_h) \frac{|\mbu|^2}{2} \, dx,
\end{equation}
as the discrete energy.
\end{definition}
Following similar arguments as in \cite{Gunnarsson:2026} and references therein,
the provided schemes are energy dissipative and mass conservative on the discrete
level for the quantities defined in Definitions~\ref{def:mass} and \ref{def:energy}.
\subsection{Limiter and $hp$-adaptivity} \label{sec:hp}
\begin{figure}[ht]
\includegraphics[width=0.89\textwidth]{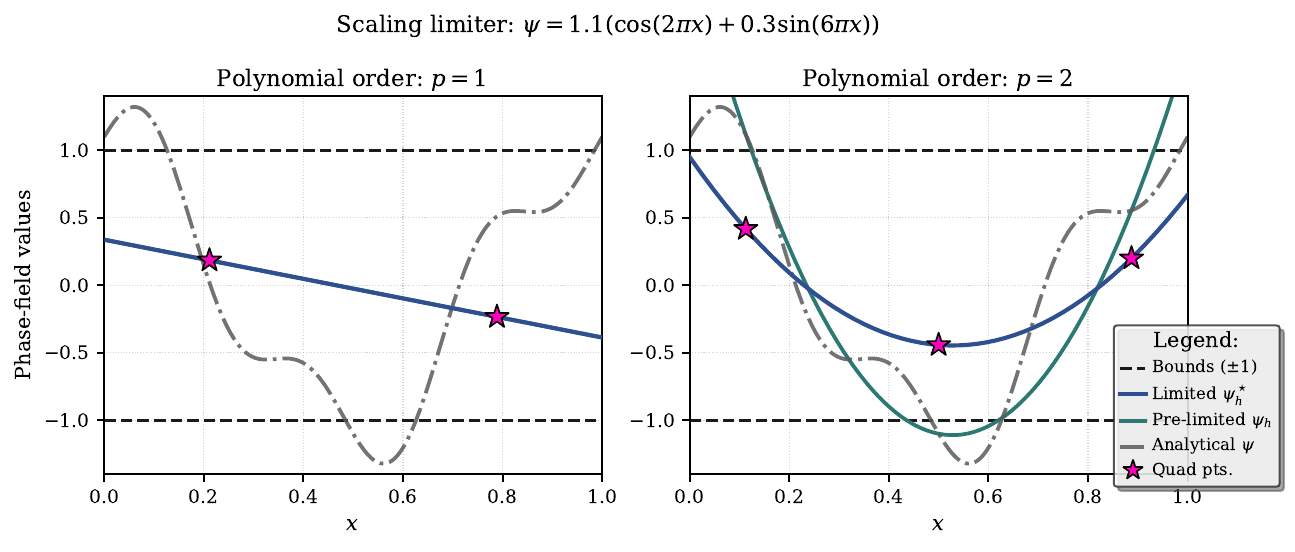} \label{fig:limiter}
\caption{Example of a scaling limiter for local $\Po^2$ phase-field $\pf_h$}
\end{figure}
A Zhang-Shu scaling limiter \cite{Zhang:2010} is applied to $\pf_h$, with quadrature
evaluation for local polynomial order $p_{\elem} \geq 1$. The limiter structure is
outlined in Fig.~\ref{fig:limiter} and described in \cite{Gunnarsson:2026}. Following
the notation in \cite{Gunnarsson:2026} we denote by \swipDL and \sipgDL the methods
corresponding to the mobility flux with $\alpha = \frac{1}{2}$ in Theorem~\ref{thm:coercivity},
while the methods with $\alpha = 0$ are denoted by \swipL and \sipgL, respectively.
We note that the mobility flux with $\alpha = \frac{1}{2}$ corresponds to a more
diffusive flux, which facilitates treatment over the intersections $\isec$, at the
potential cost of a larger global contrast $\lambda^\star$. \par
For $h$-adaptivity we follow the formulation provided in \cite{Gunnarsson:2026} and
we consider an indicator function $H(\pf_h) := \frac{(1 - \pf^2)}{4}$ and refinement
for elements $K$ where $H(\pf_h) < 0.0525$, and coarsening is suggested when
$H(\pf_h) > 0.15$. Meanwhile, for $p$-adaptivity, coarsening is performed for elements
$K$ where $H(\pf_h) > 0.075$ and refinement follows similarly to $h$-adaptivity.
Fig.~\ref{fig:hpadapt} shows an example of $hp$-adaptivity. We note that the thresholds
for refinement and coarsening are somewhat arbitrary and can be tuned.
\begin{wrapfigure}{r}{0.45\textwidth}
\centering
\includegraphics[width=0.38\textwidth]{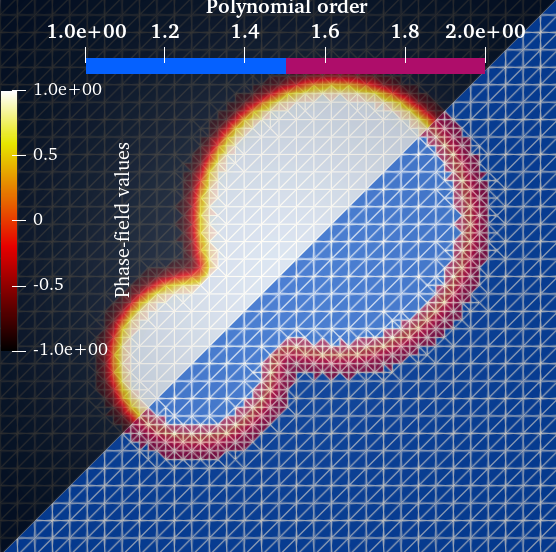}
\caption{Ex.~\ref{ex:stationary} with $hp$-adaptivity for $p_{\min} = 1$ and $p_{\max}
= 2$ at time $t = 0$.}\label{fig:hpadapt}
\vspace{-2cm}
\end{wrapfigure}
In particular, we aim to isolate elements $K$ with low-order polynomials that satisfy
$||\pf_h||_{L^\infty(K)} \lessapprox 1$ to ensure that coercivity is not broken by
populating elements with low-order polynomials with values close to the bounds, which
can ideally lead to a smaller contrast $\lambda^\star$ and less penalization in the
scheme. Such effects are examined in Section~\ref{sec:numerics}, where we compare
the performance of $h$-adaptivity and $hp$-adaptivity for the proposed schemes. Moreover,
to allow $p_{\min} = 0$ we consider $\eta_e = \underset{p \in \{p_{\elem^+_\isec},
p_{\elem^-_\isec}\}}{\max}
\max\left\{p(p+d-1), 1\right\}$.

%% file: numerics.tex
In this section, we present numerical results for the proposed schemes for both the
standalone CH equations and the coupled CH-NS system. We first consider Ex.~\ref{ex:trig}
to verify the optimal convergence rates in 2D and 3D. We then investigate the performance
of the proposed schemes for merging droplets in Ex.~\ref{ex:droplets}, and finally
consider
the rotating droplet configuration in Ex.~\ref{ex:stationary} for the coupled CH-NS
system to demonstrate
the structure-preserving properties and $hp$-adaptivity. The numerical simulations
are implemented in \dunefem~\cite{dunereview:21, dunefemdg:21}
with its \code{Python} interface~\cite{Dedner:2020}. The code to run the studied experiments is available at our GitLab repository~\cite{GunnarssonGit:2026}.
\begin{example}[Artificial trigonometric solution]\label{ex:trig}
Consider the domain $\Omega = [0,1]^d$ and the following artificial solution to the
CH equations $\pf(\mbx,0) = A\prod_{i=1}^{d} \cos(4\pi \mbx_i)$
where $A \in (0,1]$ is an amplitude parameter and $\mbx_i$ is the $i$-th component
of the spatial coordinate $\mbx$. We then consider the following forcing term $S(\mbx,t)
= Pe^{-1}\nabla \cdot (M(\pf) \nabla(Cn^2 \Delta \pf - W'(\pf)))$ added to the right-hand
side of Eq.~\eqref{eq:ch1stt},
where $Cn = 0.1$, $Pe = 0.3$ are the Cahn and Peclet numbers respectively. The simulation
is run for $t \leq T = 0.01$.
\end{example}
\begin{table}[htbp]
  \centering
  \caption{Ex.~\ref{ex:trig}: $L^2$ and $H^1$ errors and EOC for $A = 0.1$ and $A
= 0.6$ and $\tau = 10^{-(2 + p)} \cdot 2^{-\frac{N-32}{32}}$ for $p \in \{1,2\}$ and
$N$ being the number of elements in each axis.}
  \label{tab:harmonic_eoc}
  \begin{tabular}{l @{\hspace{1em}} c @{\hspace{0.8em}} c @{\hspace{0.8em}} c @{\hspace{0.8em}}
c @{\hspace{0.8em}} c @{\hspace{0.8em}} c @{\hspace{0.8em}} c @{\hspace{0.8em}} c
@{\hspace{0.8em}} c @{\hspace{0.8em}} c}
    \hline\hline
    & & \multicolumn{4}{c}{$A = 0.1$} & \multicolumn{4}{c}{$A = 0.6$} \\
    \cline{3-6} \cline{7-10}
    & & \multicolumn{2}{c}{$L^2$ Error} & \multicolumn{2}{c}{$H^1$ Error} & \multicolumn{2}{c}{$L^2$
Error} & \multicolumn{2}{c}{$H^1$ Error} \\
    Scheme, $p$ & $N$ & Error & EOC & Error & EOC & Error & EOC & Error & EOC \\
    \hline
                         &    32  &  1.35e-03 &    --- &  1.01e-01 &    --- &  7.47e-03 &    --- &  6.06e-01 &    --- \\
                         &    64  &  3.46e-04 &   1.96 &  5.04e-02 &   1.01 &  1.91e-03 &   1.97 &  3.02e-01 &   1.00 \\
    \sipgDL, $1$         &   128  &  8.74e-05 &   1.99 &  2.52e-02 &   1.00 &  4.82e-04 &   1.99 &  1.51e-01 &   1.00 \\
                         &   256  &  2.20e-05 &   1.99 &  1.26e-02 &   1.00 &  1.21e-04 &   1.99 &  7.56e-02 &   1.00 \\
    \hline
                         &    32  &  2.19e-05 &    --- &  5.10e-03 &    --- &  1.32e-04 &    --- &  3.06e-02 &    --- \\
                         &    64  &  2.72e-06 &   3.01 &  1.28e-03 &   2.00 &  1.63e-05 &   3.01 &  7.66e-03 &   2.00 \\
    \sipgDL, $2$         &   128  &  3.40e-07 &   3.00 &  3.19e-04 &   2.00 &  2.04e-06 &   3.00 &  1.91e-03 &   2.00 \\
                         &   256  &  4.25e-08 &   3.00 &  7.98e-05 &   2.00 &  2.55e-07 &   3.00 &  4.79e-04 &   2.00 \\
    \hline
                         &    32  &  1.35e-03 &    --- &  1.01e-01 &    --- &  7.48e-03 &    --- &  6.06e-01 &    --- \\
                         &    64  &  3.46e-04 &   1.96 &  5.04e-02 &   1.01 &  1.91e-03 &   1.97 &  3.02e-01 &   1.00 \\
    \sipgL, $1$          &   128  &  8.74e-05 &   1.99 &  2.52e-02 &   1.00 &  4.82e-04 &   1.99 &  1.51e-01 &   1.00 \\
                         &   256  &  2.20e-05 &   1.99 &  1.26e-02 &   1.00 &  1.21e-04 &   1.99 &  7.56e-02 &   1.00 \\
    \hline
                         &    32  &  2.19e-05 &    --- &  5.10e-03 &    --- &  1.32e-04 &    --- &  3.06e-02 &    --- \\
                         &    64  &  2.72e-06 &   3.01 &  1.28e-03 &   2.00 &  1.63e-05 &   3.01 &  7.66e-03 &   2.00 \\
    \sipgL, $2$          &   128  &  3.40e-07 &   3.00 &  3.19e-04 &   2.00 &  2.04e-06 &   3.00 &  1.91e-03 &   2.00 \\
                         &   256  &  4.25e-08 &   3.00 &  7.98e-05 &   2.00 &  2.55e-07 &   3.00 &  4.79e-04 &   2.00 \\
    \hline
                         &    32  &  1.35e-03 &    --- &  1.01e-01 &    --- &  7.47e-03 &    --- &  6.06e-01 &    --- \\
                         &    64  &  3.46e-04 &   1.96 &  5.04e-02 &   1.01 &  1.91e-03 &   1.97 &  3.02e-01 &   1.00 \\
    \swipDL, $1$         &   128  &  8.74e-05 &   1.99 &  2.52e-02 &   1.00 &  4.82e-04 &   1.99 &  1.51e-01 &   1.00 \\
                         &   256  &  2.20e-05 &   1.99 &  1.26e-02 &   1.00 &  1.21e-04 &   1.99 &  7.56e-02 &   1.00 \\
    \hline
                         &    32  &  2.19e-05 &    --- &  5.10e-03 &    --- &  1.32e-04 &    --- &  3.06e-02 &    --- \\
                         &    64  &  2.72e-06 &   3.01 &  1.28e-03 &   2.00 &  1.63e-05 &   3.01 &  7.66e-03 &   2.00 \\
    \swipDL, $2$         &   128  &  3.40e-07 &   3.00 &  3.19e-04 &   2.00 &  2.04e-06 &   3.00 &  1.91e-03 &   2.00 \\
                         &   256  &  4.25e-08 &   3.00 &  7.98e-05 &   2.00 &  2.55e-07 &   3.00 &  4.79e-04 &   2.00 \\
    \hline
                         &    32  &  1.35e-03 &    --- &  1.01e-01 &    --- &  7.48e-03 &    --- &  6.06e-01 &    --- \\
                         &    64  &  3.46e-04 &   1.96 &  5.04e-02 &   1.01 &  1.91e-03 &   1.97 &  3.02e-01 &   1.00 \\
    \swipL, $1$          &   128  &  8.74e-05 &   1.99 &  2.52e-02 &   1.00 &  4.82e-04 &   1.99 &  1.51e-01 &   1.00 \\
                         &   256  &  2.20e-05 &   1.99 &  1.26e-02 &   1.00 &  1.21e-04 &   1.99 &  7.56e-02 &   1.00 \\
    \hline
                         &    32  &  2.19e-05 &    --- &  5.10e-03 &    --- &  1.32e-04 &    --- &  3.06e-02 &    --- \\
                         &    64  &  2.72e-06 &   3.01 &  1.28e-03 &   2.00 &  1.63e-05 &   3.01 &  7.66e-03 &   2.00 \\
    \swipL, $2$          &   128  &  3.40e-07 &   3.00 &  3.19e-04 &   2.00 &  2.04e-06 &   3.00 &  1.91e-03 &   2.00 \\
                         &   256  &  4.25e-08 &   3.00 &  7.98e-05 &   2.00 &  2.55e-07 &   3.00 &  4.79e-04 &   2.00 \\
    \hline\hline
  \end{tabular}
\end{table}
\begin{table}[htbp]
  \renewcommand{\arraystretch}{1.1}
  \centering
  \caption{Ex.~\ref{ex:trig} in 3D: EOC for $L^2$ and $H^1$ errors for $A = 0.1$
and
$\dt = 10^{-3} \cdot 2^{-\frac{N-8}{8}}$.}
  \label{tab:3d_eoc}
  \begin{tabular}{l @{\hspace{1.5em}} l @{\hspace{1.5em}} c @{\hspace{1.5em}} c @{\hspace{1.5em}}
c @{\hspace{1.5em}} c @{\hspace{1.5em}} c}
    \hline\hline
    & & & \multicolumn{2}{c}{$L^2$ Error} & \multicolumn{2}{c}{$H^1$ Error} \\
    Scheme & Order & $N$ & Error & EOC & Error & EOC \\
    \hline
                    &                   &     8 & 5.50e-03 &    --- & 3.48e-01 &    --- \\
                    &                   &    16 & 1.40e-03 &   1.97 & 1.76e-01 &   0.98 \\
    \sipgDL            & $ 1 $             &    32 & 3.51e-04 &   1.99 & 8.76e-02 &   1.01 \\
                    &                   &    64 & 8.80e-05 &   2.00 & 4.37e-02 &   1.00 \\
    \hline
                    &                   &     8 & 5.50e-03 &    --- & 3.48e-01 &    --- \\
                    &                   &    16 & 1.40e-03 &   1.97 & 1.76e-01 &   0.98 \\
    \swipDL            & $ 1 $             &    32 & 3.51e-04 &   1.99 & 8.76e-02 &   1.01 \\
                    &                   &    64 & 8.80e-05 &   2.00 & 4.37e-02 &   1.00 \\
    \hline\hline
  \end{tabular}
\end{table}

In Table~\ref{tab:harmonic_eoc}, we present the $L^2$ and $H^1$ errors and EOC for
Ex.~\ref{ex:trig} for the different schemes with amplitudes $A = 0.1$ and $A = 0.6$
given $\beta = 5$.
The results are consistent with the theoretically expected convergence rates.
Additionally, \swipDL/\sipgDL and \swipL/\sipgL yield identical errors and
EOC, which is consistent with the fact that the mobility flux does not affect the
consistency of the scheme, but only the stability and coercivity. Similar behavior
for \sipgL and \swipL was also observed in \cite{Gunnarsson:2026}. We also tested if the same convergence rates hold in 3D, and the results are presented
in Table~\ref{tab:3d_eoc}.
The observed rates confirm the theoretical predictions.
\par

For the remainder of the numerical experiments, we employ the \swipDL scheme with
$\alpha = \frac{1}{2}$, as it yields a more diffusive flux and consequently improved
consistency. Furthermore, we use $\beta = 3$ for the mobility flux as in Rmk.~\ref{rem:fs}.
\begin{example}[Merging droplets]\label{ex:droplets}
We consider the CH equations without advection,
i.e., $\mathbf{u}(\cdot, \cdot) = \mb{0}$, in the domain $\Omega = [0,1]^2$.
The initial condition is given by a smooth profile
\begin{equation}
    \pf(\mbx,0) = (1 - 10^{-8})\left(2\min\left\{\left(1 + 2^{-1}\sum_{j=1}^{2} \tanh\left(\frac{r
- || \mathbf{x} - \mathbf{c}_j||}{\sqrt{2}Cn}\right)
\right),1\right\} - 1\right),
\end{equation}
where $r = 0.2$ is the droplet radius, with central points $\mathbf{c}_1 = (0.3,
0.5)^T$ and $\mathbf{c}_2 = (0.7, 0.5)^T$, Cahn number $Cn = h_{\min}$, and Peclet
number $Pe^{-1} = 3 Cn$. The simulation is run for $ t \leq T = 0.4$.
\end{example}
\begin{example}[Rotating Merging Bubbles] \label{ex:stationary}
We consider the CHNS equations in the domain $\Omega
= [-0.5,0.5]^2$, with an initial velocity field
\begin{equation}
\mathbf{u}(\mbx, 0) = \chi \left( x_2 \left(0.16 - ||\mbx||^2\right)_{\oplus},
-x_1 \left(0.16 - ||\mbx||^2\right)_{\oplus} \right),
\end{equation}
where $\mbx = (x_1, x_2)^T$, and $\chi = 100$ is a scaling factor. The initial phase-field
profile
is
\begin{equation}
\pf(\mbx, 0) = 0.995\left(2\min\left\{\left(1 + 2^{-1}\sum_{j=1}^{2} \tanh\left(\frac{r_j- || \mathbf{x} - \mathbf{c}_j||}{\sqrt{2}Cn}\right)\right),1\right\} - 1\right),
\end{equation}
where $r_1 = 0.25$ and $r_2 = 0.15$ are the radii of the respective droplets, with
central points $\mathbf{c}_1 = (0.1, 0.1)^T$ and $\mathbf{c}_2 = (-0.15, -0.15)^T$,
respectively. The following non-dimensional numbers are considered: Reynolds number
$Re = 1$, Cahn number $Cn = h_{\min}$, Weber number $We = Cn^{-1}$, viscosities
$\mu_1 = \mu_2 = 1$, densities $\rho_1 = 100$ and $\rho_2 = 1$, and Peclet number
$Pe^{-1} = 3 Cn$. The simulation is run for $ t \leq T = 0.2$.
\end{example}
For Exs.~\ref{ex:droplets} and \ref{ex:stationary}, we consider an $hp$-adaptive
formulation with $h_{\min} = \frac{1}{64}$ and $h_{\max} = \frac{1}{32}$ using
the \swipDL scheme and a prescribed maximum polynomial degree of $\pmax$ and $\pmin \geq 0$. For the time increments, following \cite{Gunnarsson:2026}, we set
$\dt = 10^{-3}$ for Ex.~\ref{ex:droplets} and $\dt = 5\cdot 10^{-4}$ for Ex.~\ref{ex:stationary}.
We simulate with different order levels of $p$ for $p$-adaptivity and also compare
to only the $h$-adaptive formulation, i.e., $p = \pmax$ for all elements $K \in
\grid$. We observe that convergence was not achieved for Ex.~\ref{ex:droplets} with
$p = 2$ for the $h$-adaptive formulation; convergence was only obtained,
in that case,
when $||\pf_h||_{L^\infty(K)} \lessapprox 1$, as was also stressed in
\cite{Gunnarsson:2026} regarding careful selection of the initial data. To the best
of the authors' knowledge there is no clear
proof or indication on how to find a safe initial value.
\begin{figure}
\includegraphics[width=0.999\textwidth]{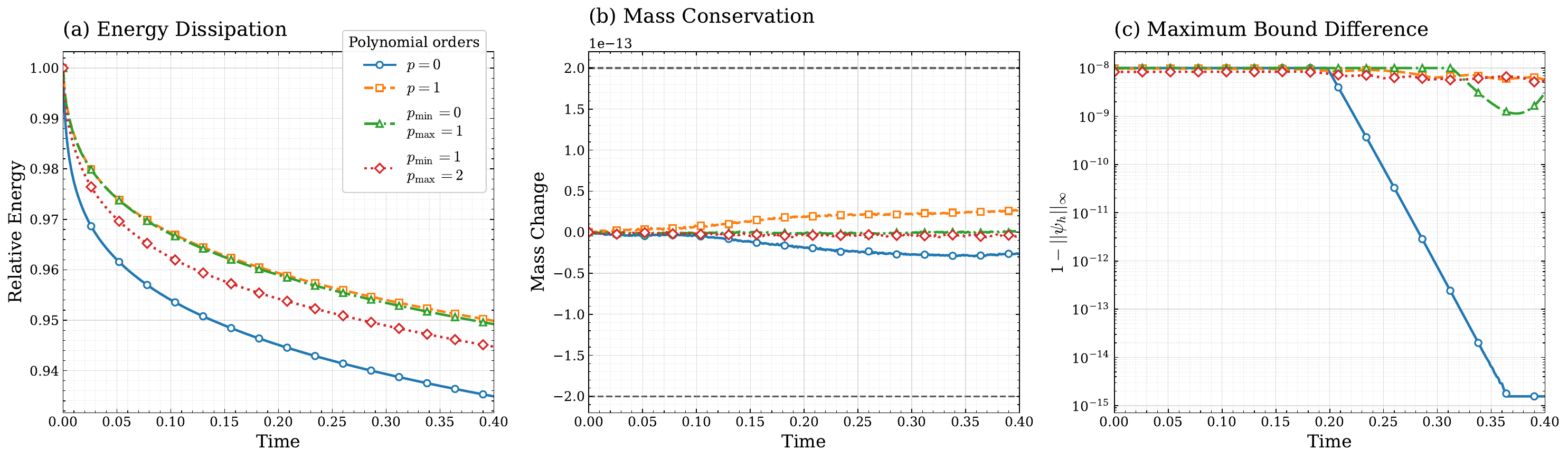}
\caption{Ex.~\ref{ex:droplets}: Energy, mass, and boundedness for \swipDL.} \label{fig:droplets}
\end{figure}
Figure~\ref{fig:droplets} shows the energy, mass, and boundedness for Ex.~\ref{ex:droplets}
using the $hp$-adaptive formulation of \swipDL for various $p$-levels with $hp$-adaptivity. The energy is monotonically non-increasing, mass is conserved, and the phase-field
remains bounded, consistent with the theoretical properties. The results are similar
to those obtained
using only $h$-adaptivity with $p = 1$, and for $p_{\max} = 1$ with $hp$-adaptivity, which suggests
that the $hp$-adaptive formulation preserves accuracy
alongside the $h$-adaptive formulation, while providing a significant reduction
in complexity and degrees of freedom by using higher-order polynomials where needed and lower-order polynomials elsewhere with larger elements.
\begin{figure}[ht]
\includegraphics[width=0.999\textwidth]{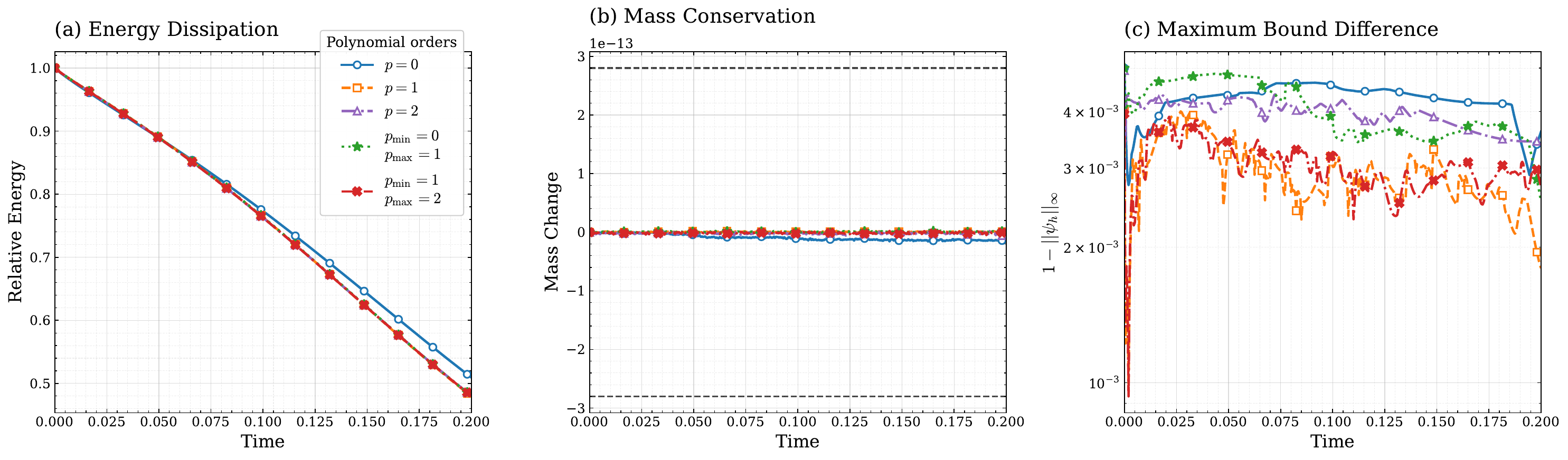}
\caption{Ex.~\ref{ex:stationary}: Energy, mass, and boundedness for \swipDL. Note that the lines for the energy dissipation overlap when $p \neq 0$.
  } \label{fig:stationary}
\end{figure}
Figure~\ref{fig:stationary} shows that the energy is non-increasing, mass is
conserved, and the phase-field remains bounded for Ex.~\ref{ex:stationary}
using the $hp$-adaptive formulation of \swipDL both with and without $p$-adaptivity. The results are similar to those obtained using
only $h$-adaptivity for the studied $p$-levels, which
further indicates that the $hp$-adaptive formulation preserves physical
accuracy alongside the $h$-adaptive formulation, while providing a significant
reduction in complexity and degrees of freedom, similarly to our discussion for Ex.~\ref{ex:droplets}.
For both studied examples, minor mass drifts, as can be found in Figs.~\ref{fig:droplets} and~\ref{fig:stationary}, are a known issue with potential deviations scaling
as the non-linear tolerance as noted in \cite{Gunnarsson:2026}. Regardless, as shown in Figs.~\ref{fig:droplets} and~\ref{fig:stationary}, the mass change is negligible compared to the lines at $\Delta m  = \pm \frac{T}{\dt} \epsilon$ for $\epsilon = 5 \cdot 10^{-16}$ for Ex.~\ref{ex:droplets} and $\epsilon = 7 \cdot 10^{-16}$ for Ex.~\ref{ex:stationary}.

%% file: summary.tex
A new class of DG schemes for the CH equations with degenerate mobility has been proposed, which is provably coercive and stable under a certain condition on the mobility flux for the penalty term. The introduced mobility flux formulations for the penalty term include \sipgDL, which employs the intersection maximum of the mobility, and \swipDL, which uses a harmonic average. These mobility fluxes yield similar estimates for the coercivity condition and accuracy as previous schemes. Numerical experiments illustrate the performance of the proposed schemes, demonstrating that they achieve optimal convergence rates and that the harmonic mobility flux provides potentially significant improvements in stability and accuracy compared to the arithmetic mean. An $hp$-adaptive formulation has also been presented, showing significant reductions in complexity and degrees of freedom compared to the $h$-adaptive formulation while maintaining similar accuracy and physical fidelity.
Future work includes extending the theoretical analysis to provide rigorous structure-preserving proofs for the \swipDL scheme in the discrete setting, exploiting the harmonic average to extend Theorem~\ref{thm:bounded}. Additionally, more challenging test cases, such as the rising bubble benchmark, will further validate the performance of the proposed schemes for coupled CHNS systems and assess adaptivity effects under more demanding conditions.